\newtheorem{theorem}{\bf Theorem}[section]
\newtheorem{corollary}[theorem]{\bf Corollary}
\newtheorem{proposition}[theorem]{\bf Proposition}
\newcommand{\qed}{\hfill $\square$ \bigskip}
\newcommand{\aut}{\rm Aut}
\newcommand{\gp}{\rm GP}
\begin{document}

\baselineskip=0.30in
\vspace*{40mm}

\begin{center}
{\LARGE \bf The Graovac-Pisanski index of  connected bipartite graphs with applications to hydrocarbon molecules}
\bigskip \bigskip

{\large \bf Matev\v z \v Crepnjak$^{a,b}$, Martin Knor$^d$, Niko Tratnik$^{a}$,\\ Petra \v Zigert Pleter\v sek$^{a,b}$}

\smallskip
{\em  $^a$Faculty of Natural Sciences and Mathematics, University of Maribor,  Slovenia} \\
{\tt matevz.crepnjak@um.si, \tt niko.tratnik@um.si, \tt petra.zigert@um.si}
\medskip

{\em  $^b$Faculty of Chemistry and Chemical Engineering, University of Maribor, Slovenia} \\
\medskip

{\em $^d$ Faculty of Civil Engineering, Slovak University of Technology in
Bratislava, Bratislava, Slovakia} \\
{\tt knor@math.sk}

\bigskip\medskip

(Received March 20, 2021)

\end{center}

\noindent
\begin{center} {\bf Abstract} \end{center}

\vspace{3mm}\noindent

The Graovac-Pisanski index, also called the modified Wiener index,  was introduced in 1991 and represents an extension  of the original Wiener index, because it considers beside the distances in a graph also its  symmetries.  Similarly as Wiener in 1947 showed the correlation of the Wiener indices of the alkane series  with the  boiling points,  in 2018  the  connection between the  Graovac-Pisanski index and the  melting points of some hydrocarbon molecules was established.  
In this paper, we prove that the Graovac-Pisanski index of any connected bipartite graph as well as  of any connected graph on an even number of vertices is an integer number. These results are applied to some important families of hydrocarbon molecules.
By using a computer programme, the graphs with a non-integer Graovac-Pisanski index  on at most nine vertices are counted. Finally, an infinite class of unicyclic graphs with a non-integer Graovac-Pisanski index is described.


\baselineskip=0.30in

\noindent {\bf Key words:} modified Wiener index; Graovac-Pisanski index; graph distance; automorphism group;  hydrocarbons; carbon nanostructures.

 \medskip\noindent
 {\bf AMS Subj. Class:} 05C92, 05C09, 92E10, 05C12.


\section{Introduction}

In \cite{todes} we can find the following definition:
"The molecular descriptor is the final result of a logic and mathematical procedure which transforms chemical information encoded within a symbolic representation of a molecule into a useful number or the result of some standardized experiment."  

Since the seminal research by Wiener from 1947 \cite{Wiener}   many molecular descriptors, i.e.\,graph invariants, were introduced. Different variations of  the Wiener index  have been known  since then, one of them being the  Graovac-Pisanski index \cite{graovac}, also called the  modified Wiener index. The advantage of this index in comparison to other distance-based indices is in the encountering of the symmetries of a graph, since the symmetries of a molecule have an influence on its properties \cite{pinal}. 

On one side, the  mathematical properties of  the  Graovac-Pisanski index were investigated. For example, the index was intensely studied for nanostructures \cite{ashrafi_diu,ashrafi_koo_diu,hakimi,koo_ashrafi3,koo_ashrafi,sha_ashrafi}, some general results on the Graovac-Pisanski index were obtained \cite{ashrafi_sha,ghorbani}, the closed formulae for carbon nanotubes were calculated \cite{tratnik,trat-plet}, extremal trees were considered \cite{knor2019}, and the symmetries of molecules were studied \cite{koo_ashrafi2,koo_ashrafi4}. On the other side, 
the chemical usefulness of the Graovac-Pisanski index was shown in \cite{ashrafi_koo_diu1}, where the connection  with the topological efficiency was considered  and later in \cite{crepnjak}, where the correlation with the melting points of some families of hydrocarbon molecules was established. This was done with the use of the QSPR analysis and for all the considered molecular graphs from \cite{crepnjak} the Graovac-Pisanski index is an integer number. Naturally, the question whether  the  Graovac-Pisanki index is an integer number for all connected graphs arises by this observation.

We proceed as follows. In the next section, some basic notation is introduced and important definitions are included. In Section \ref{main}, we prove that the Graovac-Pisanski index of any connected bipartite graph is an integer number. Moreover, we show that the same conclusion holds also for any connected graph with an even number of vertices. In particular,  this means that for various molecular graphs the Graovac-Pisanski index is an integer, what is explained in  Section \ref{appl}. In the same section, the closed formula for the Graovac-Pisanski index of an infinite family of nanotubical fullerenes is deduced.  Finally, in Section \ref{non_in} we use a computer programme to obtain the number of connected graphs on at most nine vertices for which the Graovac-Pisanski index is not an integer. An infinite family of graphs with a non-integer Graovac-Pisanski index is also included.

\section{Preliminaries}

All the graphs considered in this paper are simple, finite
and connected.
The {\em distance} $d_G(u,v)$ between vertices $u$ and $v$ of a graph $G$
is the length of a shortest path between vertices $u$ and $v$ in $G$.
 A \textit{bipartite graph} is a graph whose vertices can be divided into two disjoint sets $U$ and $V$ such that every edge connects a vertex in $U$ to one in $V$. Vertex subsets $U$ and $V$ will be called the \textit{bipartite sets} of the graph.

The {\em Wiener index} of a connected graph $G$ is defined as follows:
$$
W(G) = \sum_{\lbrace u,v \rbrace \subseteq V(G)} d_G(u,v).
$$
Also, for any $S \subseteq V(G)$ we define $$W_G(S) = \sum_{\lbrace u,v \rbrace \subseteq S} d_G(u,v).$$

An \textit{isomorphism of graphs} $G$ and $H$ 
is a bijection $f: V(G)\to V(H)$, such that for any two vertices $u$ and $v$ from $G$
it holds that $u$ and $v$ are adjacent in $G$ if and only if $f(u)$
and $f(v)$ are adjacent in $H$.
If $f: V(G)\to V(G)$ is an isomorphism then the function $f$ is called
an \textit{automorphism} of the graph $G$.
It is easy to check that the composition of two automorphisms is another automorphism. Moreover, the set of all automorphisms of a given graph $G$, under
the composition operation, forms the \textit{automorphism group} of $G$, denoted by ${\aut}(G)$.

Let $G$ be a connected graph. The \textit{Graovac-Pisanski index} of $G$
is defined as
$$
{\gp}(G) = \frac{|V(G)|}{2 |{\aut}(G)|} \sum_{u \in V(G)} \sum_{\alpha \in {\aut}(G)} d_G(u, \alpha(u)).
$$
Let us mention, that for the Graovac-Pisanski index, also known as the modified Wiener index, the notation $\widehat{W}(G)$ is used, but to clear the confusion in the area of variations of the Wiener indices, we use the name Graovac-Pisanski index, as suggested in \cite{ghorbani} and denote it with the ${\gp}(G)$.

Finally, we include some basic definitions from group theory.
If $G$ is a group and $X$ is a set, then a \textit{group action}
$\phi$ of $G$ on $X$ is a function $\phi :G \times X \to X$
that satisfies the following: $\phi(e,x) = x$ for any $x \in X$
(here, $e$ is the neutral element of $G$) and $\phi(gh,x)=\phi(g,\phi(h,x))$
for all $g,h \in G$ and $x \in X$.
The \textit{orbit} of an element $x$ in $X$ is the set of elements
in $X$ to which $x$ can be moved by the elements of $G$,
i.e. the set $\lbrace \phi(g,x) \, | \, g \in G \rbrace$.
If $G$ is a graph and ${\aut}(G)$ the automorphism group,
then $\phi: {\aut}(G) \times V(G) \to V(G)$, defined by
$\phi(\alpha,u) = \alpha(u)$ for any $\alpha \in {\aut}(G)$,
$u \in V(G)$, is called the \textit{natural action} of the group
${\aut}(G)$ on $V(G)$.

It was shown by Graovac and Pisanski \cite{graovac} that if $V_1, \ldots, V_t$ are the orbits
under the natural action of the group ${\aut}(G)$ on $V(G)$, then
\begin{equation}
\label{eq:GP2}
{\gp}(G) = |V(G)| \sum_{i=1}^t \frac{1}{|V_i|}W_G(V_i).
\end{equation}

\section{Main results}
\label{main}

In this section we prove the main result of the paper, i.e.\ the Graovac-Pisanski index of any connected bipartite graph is always an integer number.

First, we will rewrite the Graovac-Pisanski index in another form. For this purpose, we need some additional definitions. Let $G$ be a graph and let $u\in V(G)$.
The {\em distance of} $u$ in $G$, $w_G(u)$, is the sum of  from $u$
to all the other vertices of $G$.
That is, $w_G(u)=\sum_{v\in V(G)}d_G(u,v)$. Moreover, if $S \subseteq V(G)$ and $u \in V(G)$, then $w_S(u)=\sum_{v\in S}d_G(u,v)$.
Using the distance, one can rewrite the Wiener index for $S \subseteq V(G)$ as follows
\begin{equation}
\label{eq:W}
W_G(S)=\frac 12\sum_{u\in S}w_S(u).
\end{equation}

\begin{proposition} \label{propo3}
Let $G$ be a connected graph and let $v_1,\dots,v_t$ be the representatives of orbits $V_1,\dots,V_t$ under the natural action of the group ${\aut}(G)$ on $V(G)$. Then
\begin{equation}
\label{eq:GP3}
{\gp}(G) = |V(G)| \sum_{i=1}^t \frac 12 w_{V_i}(v_i),
\end{equation}
\end{proposition}

\begin{proof}
If two vertices $u,v$ are in the same orbit $V_i$, $i \in \lbrace 1, \ldots, t \rbrace$, then it obviously holds $w_{V_i}(u) = w_{V_i}(v)$. Since all the vertices in the same orbit have the very same sum of distances, inserting (\ref{eq:W}) into (\ref{eq:GP2}) we get 
$${\gp}(G) = |V(G)| \sum_{i=1}^t \frac{|V_i| \cdot w_{V_i}(v_i)}{2|V_i|} = |V(G)| \sum_{i=1}^t \frac 12 w_{V_i}(v_i)$$

\noindent
and the required result follows. \qed
\end{proof}


\noindent
By (\ref{eq:GP3}) we get the following observations.

\begin{corollary} \label{sodo}
If $G$ is a connected graph with an even number of vertices, then  ${\gp}(G)$ is an integer
number.
\end{corollary}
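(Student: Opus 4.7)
The plan is essentially a one-line observation based on the reformulation (\ref{eq:GP3}) that was just established. Starting from
$$
\gp(G) = |V(G)| \sum_{i=1}^t \frac{1}{2} w_{V_i}(v_i),
$$
I would rewrite this as
$$
\gp(G) = \frac{|V(G)|}{2} \sum_{i=1}^t w_{V_i}(v_i).
$$
When $|V(G)|$ is even, the factor $|V(G)|/2$ is an integer.

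Next I would observe that each $w_{V_i}(v_i) = \sum_{v \in V_i} d_G(v_i,v)$ is a sum of graph distances, hence a nonnegative integer, so $\sum_{i=1}^t w_{V_i}(v_i) \in \mathbb{Z}$. Being a product of two integers, $\gp(G)$ is then an integer.

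There is no real obstacle: once the reformulation (\ref{eq:GP3}) has been done, the statement is immediate. The only thing worth double-checking is that the rewriting of the factor $\tfrac{1}{2}$ outside the sum is legitimate, which it is because the same constant appears in every summand.
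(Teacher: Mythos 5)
Your proof is correct and is exactly the argument the paper intends: the corollary is stated as an immediate consequence of (\ref{eq:GP3}), with the factoring of $|V(G)|/2$ as an integer and the integrality of each $w_{V_i}(v_i)$ left implicit. You have simply spelled out the one-line observation the paper omits.
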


\begin{corollary}
\label{prop:half}
The Graovac-Pisanski index of any connected graph is either an integer
or half of an integer number.
\end{corollary}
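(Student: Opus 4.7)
My plan is to derive the corollary essentially directly from formula (\ref{eq:GP3}), which is the form of the Graovac-Pisanski index established in the preceding proposition. The point is that each distance $d_G(x,y)$ between vertices of $G$ is a nonnegative integer, hence each quantity
$$
w_{V_i}(v_i)=\sum_{v\in V_i} d_G(v_i,v)
$$
is an integer, and therefore so is the sum $S:=\sum_{i=1}^t w_{V_i}(v_i)$.

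Substituting this into (\ref{eq:GP3}) gives
$$
{\gp}(G)=\frac{|V(G)|}{2}\cdot S,
$$
so $2\,{\gp}(G)=|V(G)|\cdot S$ is a product of two integers, hence an integer. This says exactly that ${\gp}(G)$ is either an integer or a half of an odd integer, which is the conclusion of the corollary.

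There is no serious obstacle here; the only thing to check is that the informal reading of (\ref{eq:GP3}) really does let one pull the factor $\tfrac12$ out as a common factor in front of a sum of integers, which it does because $|V(G)|$ does not depend on the summation index $i$. I would write the argument in essentially one short displayed line after citing (\ref{eq:GP3}), and note that Corollary~\ref{sodo} can be read off at the same time as the case when $|V(G)|$ is even, since then $|V(G)|/2$ is already an integer and multiplication by $S$ preserves integrality.
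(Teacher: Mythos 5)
Your proposal is correct and is essentially the paper's own argument: the paper states this corollary as an immediate observation from (\ref{eq:GP3}), and the intended reasoning is exactly what you wrote — each $w_{V_i}(v_i)$ is a sum of graph distances and hence an integer, so $2\,{\gp}(G)=|V(G)|\sum_{i=1}^t w_{V_i}(v_i)$ is an integer. Your remark that Corollary \ref{sodo} falls out of the same computation when $|V(G)|$ is even also matches how the paper treats both corollaries as consequences of the same formula.
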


\noindent
However, if $G$ is bipartite, we can say more. The following result is the main result of this paper.

\begin{theorem}
\label{thm:bipartite}
If $G$ is a connected bipartite graph, then ${\gp}(G)$ is an integer
number.
\end{theorem}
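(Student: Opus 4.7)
The plan is to split into two cases based on the parity of $|V(G)|$. The even case is already handled by Corollary \ref{sodo}, so the real work is only in the odd case.

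When $|V(G)|$ is odd, the key structural observation is that the two parts $A, B$ of the bipartition satisfy $|A|\neq|B|$. Since $G$ is connected, its bipartition is unique, and any automorphism of $G$ must either fix the two parts setwise or swap them; the latter is ruled out by $|A|\neq|B|$. Consequently, every orbit $V_i$ under the action of $\aut(G)$ on $V(G)$ lies entirely inside $A$ or entirely inside $B$. I would carry this step out first since it is the main conceptual ingredient.

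With that in hand, the parity argument is immediate: in a bipartite graph, the distance between two vertices is even if and only if they lie in the same part. For any orbit $V_i$ and any $v\in V_i$, both $v$ and the representative $v_i$ belong to the same part of the bipartition, so $d_G(v_i,v)$ is even. Hence
\[
w_{V_i}(v_i)\;=\;\sum_{v\in V_i}d_G(v_i,v)
\]
is a sum of even integers, and therefore $\tfrac12 w_{V_i}(v_i)\in\mathbb{Z}$. Plugging this into formula (\ref{eq:GP3}) yields
\[
\gp(G)\;=\;|V(G)|\sum_{i=1}^t \tfrac12 w_{V_i}(v_i)\;\in\;\mathbb{Z},
\]
which completes the odd case.

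The main obstacle, such as it is, is purely the setwise-preservation of the bipartition; once this is established the parity of distances does all the work. It is worth noting that in the even case one does not need bipartiteness at all (Corollary \ref{sodo} suffices), so the hypothesis of bipartiteness is only truly used to rule out a part-swapping automorphism when $|V(G)|$ is odd.
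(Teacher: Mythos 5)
Your proposal is correct and takes essentially the same approach as the paper: both arguments rest on the facts that a part-swapping automorphism forces $|U|=|V|$ (hence even order, handled by Corollary \ref{sodo}), and that otherwise every orbit lies inside one part of the bipartition, so all the sums $w_{V_i}(v_i)$ in (\ref{eq:GP3}) are even. The only cosmetic difference is the case split: you split on the parity of $|V(G)|$ and rule out swapping automorphisms when the order is odd, while the paper splits on the existence of a swapping automorphism and deduces even order when one exists --- the two are contrapositive organizations of the same argument.
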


\begin{proof}
Let $G$ be a bipartite graph and let $U,V$ be the bipartition of $V(G)$.
That is, no edge connects two vertices of $U$  and no edge connects two
vertices of $V$. We consider two cases:
\begin{itemize}
\item [$(i)$] \textit{There is $\psi \in \aut(G)$ such that $\psi(u)=v$ for some $u \in U$ and $v \in V$.}\\
In this case, we will show that $\psi$ reverses the bipartite sets, i.e.\ $\psi(x) \in V$ for any $x \in U$ and $\psi(y) \in U$ for any $y \in V$. To prove this, suppose that $x \in U$ but $\psi(x)$ also belongs to $U$. Since $u,x \in U$, the distance $d_G(u,x)$ is an even number. Therefore, since $\psi$ is an automorphism, $d_G(\psi(u),\psi(x))=d_G(u,x)$ must be even, too. Since $\psi(u)=v$, we obtain that $d_G(v,\psi(x))$ is even. But $\psi(x) \in U$ and $v \in V$ and therefore, this is a contradiction. In a similar way one can show $\psi(y) \in U$ for any $y \in V$. We have proved that $\psi$ reverses the bipartite sets. Moreover, the restriction of $\psi$ on $U$, $\psi|_U$, is a bijection from $U$ to $V$. Consequently, $|U|=|V|$ and $|V(G)|$ is even. By Corollary \ref{sodo}, the Graovac-Pisanski index of $G$ is an integer number.


\item [$(ii)$] \textit{There is no automorphism that reverses the bipartite sets.} \\
In this case, if $u$ and $v$ are two vertices from the same orbit, these two vertices are also in the same bipartite set. Therefore, the distance between them must be even. Consequently, if $v_1,\dots,v_t$ are the representatives of orbits $V_1,\dots,V_t$ under the natural action of the group ${\aut}(G)$ on $V(G)$, all the distances $w_{V_i}(v_i)$ are even in (\ref{eq:GP3}), and
so the Graovac-Pisanski index is again an integer number.
\end{itemize}
Since the Graovac-Pisanski index is an integer number in both cases, the proof is complete. \qed
\end{proof}

\section{Applications to molecular graphs}
\label{appl}

In this section we apply our main results to some well-known molecular graphs.

\subsection{Paraffins, benzenoid hydrocarbons, phenylenes and carbon nanotubes}

{\it Paraffins} (alkanes) are mathematically modelled by chemical trees, i.e.\,trees in which every vertex has degree at most four. 

Let ${\cal H}$ be the hexagonal (graphite) lattice and let $Z$ be a cricuit on it. Then a {\it benzenoid system} is induced by the vertices and edges of ${\cal H}$, lying on $Z$ and in its interior. In chemistry a benzenoid system ia a mathematical model for a benzenoid hydrocarbon. Let $G$ be a benzenoid system. A vertex shared by three hexagons of $G$ is called an \textit{internal} vertex of $G$. A benzenoid system is said to be \textit{catacondensed} if it does not possess internal vertices. Otherwise it is called \textit{pericondensed}. Two distinct hexagons with a common edge are called \textit{adjacent}.  Let $G$ be a catacondensed benzenoid system. If we add squares between all pairs of adjacent hexagons of $G$, the obtained graph $G'$ is called a \textit{phenylene}, see Figure \ref{benphen}.

\begin{figure}[!htb]
	\centering
		\includegraphics[scale=0.6, trim=0cm 0cm 1cm 0cm]{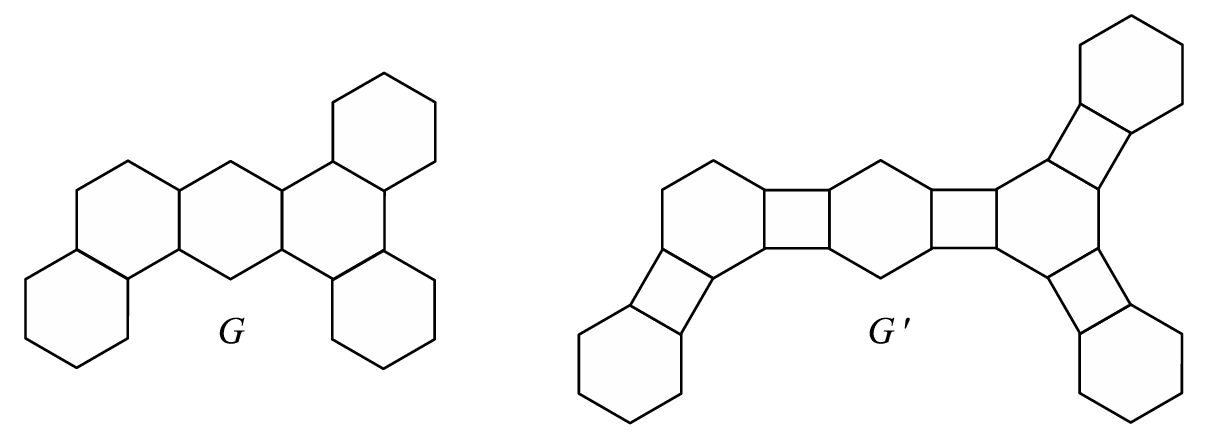}
\caption{A benzenoid system $G$ and corresponding phenylene $G'$.}
\label{benphen}
\end{figure}

Next we  formally define open-ended carbon nanotubes, also called tubulenes. Choose any lattice point in the hexagonal lattice as the origin $O$. Let $\overrightarrow{a_1}$ and $\overrightarrow{a_2}$ be the two basic lattice vectors.
 Choose a vector $ \overrightarrow{OA} =n\overrightarrow{a_1}+m \overrightarrow{a_2}$
such that $n$ and $m$ are two integers and $|n|+|m|>1$, $nm\neq -1$. Draw two straight lines $L_1$ and $L_2$ passing through
$O$ and $A$ perpendicular to $O A$, respectively. By rolling up the hexagonal strip between $L_1$ and $L_2$ and gluing $L_1$ and $L_2$ such
that $A$ and $O$ superimpose, we can obtain a hexagonal tessellation $\mathcal{HT}$ of the cylinder. $L_1$ and $L_2$ indicate the direction of
the axis of the cylinder. Using the terminology of graph theory, a {\it tubulene} $G$ is defined to be the finite graph induced by all
the hexagons of $\mathcal{HT}$ that lie between $c_1$ and $c_2$, where $c_1$ and $c_2$ are two vertex-disjoint cycles of $\mathcal{HT}$ encircling the axis of
the cylinder.

For any  tubulene $G$, if its chiral vector is $ n \overrightarrow{a_1} + m \overrightarrow{a_2}$, $G$ will be called an $(n,m)$-type tubulene, see Figure \ref{zig-zag} \cite{tratnik}. If $G$ is a $(n,m)$-type tubulene where $n=0$ or $m=0$, we call it a \textit{zig-zag tubulene}.

\begin{figure}[!htb]
	\centering
		\includegraphics[scale=0.6, trim=0cm 0cm 1cm 0cm]{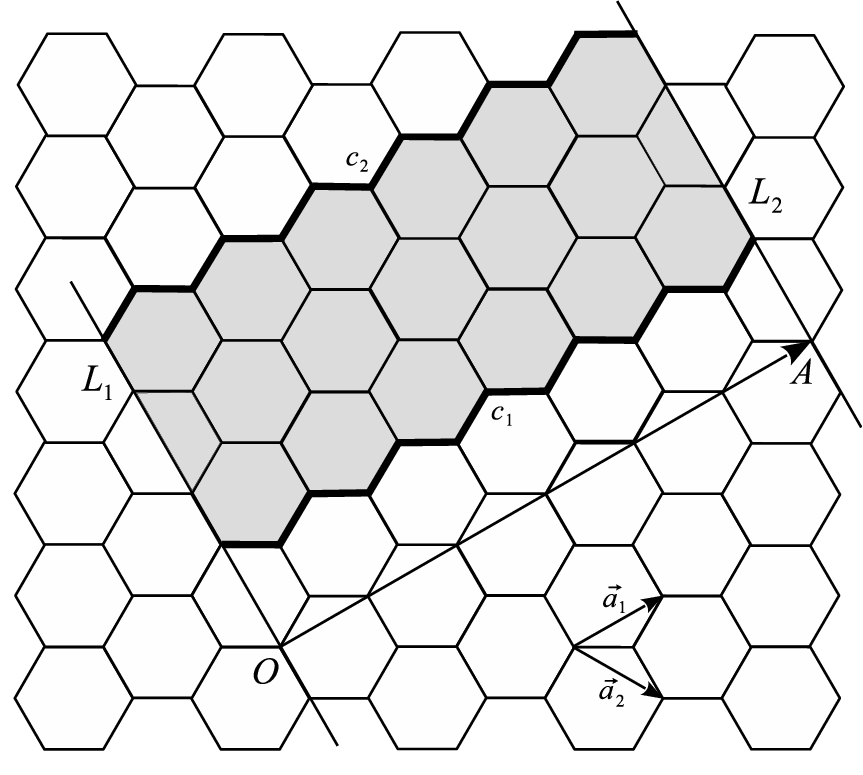}
\caption{A $(6,0)$-type tubulene (zig-zag tubulene).}
	\label{zig-zag}
\end{figure}

\begin{proposition}
 Let $G$ be a paraffin, a benzenoid system, a phenylene or a tubulene. The Graovac-Pisanski index of $G$ is an integer
number. 
\label{molecular}
\end{proposition}

\begin{proof}
Obviously every paraffin is a bipartite graphs and the same holds for benzenoid systems and phenylenes. It was shown in \cite{tr-zi} that tubulenes are
bipartite graphs as well. Therefore by Theorem \ref{thm:bipartite} the Graovac-Pisanski index of $G$ is an integer number.
\qed
\end{proof}

\subsection{Fullerenes}
A \textit{fullerene} $G$ is a  $3$-connected 3-regular plane graph such that every face is bounded by either a pentagon or a hexagon, see Figure \ref{grafF} (obviously, fullerenes
are not bipartite graphs).  By Euler's formula, it follows that 
the number of pentagonal faces of a fullerene is exactly $12$. For more 
information on fullerenes see \cite{andova}.

\bigskip

\begin{figure}[h!] 
\begin{center}
\includegraphics[scale=1.1]{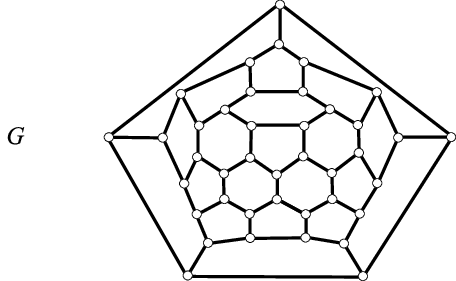}
\end{center}
\caption{\label{grafF} A fullerene $G$.}
\end{figure}

The following result about the Graovac-Pisanski index of fullerenes can now be stated. 

\begin{proposition} Let $G$ be a fullerene. The Graovac-Pisanski index of $G$ is an integer
number. \label{ful}
\end{proposition}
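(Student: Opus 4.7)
The plan is to observe that a fullerene must always have an even number of vertices, which immediately lets us invoke Corollary \ref{sodo}. Indeed, a fullerene $G$ is by definition $3$-regular, so the handshake lemma yields
\begin{equation*}
3|V(G)| = \sum_{v \in V(G)} \deg(v) = 2|E(G)|,
\end{equation*}
which forces $|V(G)|$ to be even (since $2|E(G)|$ is even but $3$ is odd).

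Once this parity observation is in hand, the proof is essentially over: by Corollary \ref{sodo}, any connected graph on an even number of vertices has an integer Graovac-Pisanski index, so in particular $\gp(G)$ is an integer. The pentagon/hexagon face structure and the fact that there are exactly $12$ pentagonal faces (obtained from Euler's formula) play no role here; only $3$-regularity is used.

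The main "obstacle" is therefore not really an obstacle at all. One might wonder whether a sharper statement could be extracted (for instance, relating the value of $\gp(G)$ modulo larger integers to the automorphism group of the fullerene), but for the proposition as stated, the argument reduces to a one-line divisibility check followed by an application of Corollary \ref{sodo}. I would accordingly keep the proof at three or four lines rather than invoking the machinery of bipartiteness from Theorem \ref{thm:bipartite}, which is not needed (fullerenes in general contain $5$-cycles and are not bipartite).
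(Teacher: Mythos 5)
Your proof is correct and follows essentially the same route as the paper: the paper also argues that since every vertex of a fullerene has odd degree (namely $3$), the number of vertices must be even, and then invokes Corollary \ref{sodo}. Your handshake-lemma computation $3|V(G)|=2|E(G)|$ is just a slightly more explicit way of stating that same parity argument, and your remark that the face structure is irrelevant matches the paper's proof as well.
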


\begin{proof}
Since all vertices in a fullerene have odd degrees, the graph must have even
number of vertices.
(It is well known that a fullerene with $n$ vertices exists for any even
$n\geq 24$ and for $n=20$, for the details see Theorem 2.2 in \cite{andova}.)
Hence, by Corollary \ref{sodo} it follows that the Graovac-Pisanski index
of $G$ is an integer number. 
\qed
\end{proof}

To conclude the section, we compute the Graovac-Pisanski index of an infinite family of nanotubical fullerenes denoted by $F_n$, $n \geq 1$. In particular, $F_n$ is obtained by the $(6,0)$-type tubulene  with $n$ layers of hexagons (zig-zag tubulene such that every layer contains 6 hexagons), closed with two caps shown in Figure \ref{caps} \cite{andova1}. 

\begin{figure}[h!] 
\begin{center}
\includegraphics[scale=1.1, trim=0cm 0.3cm 0cm 0cm]{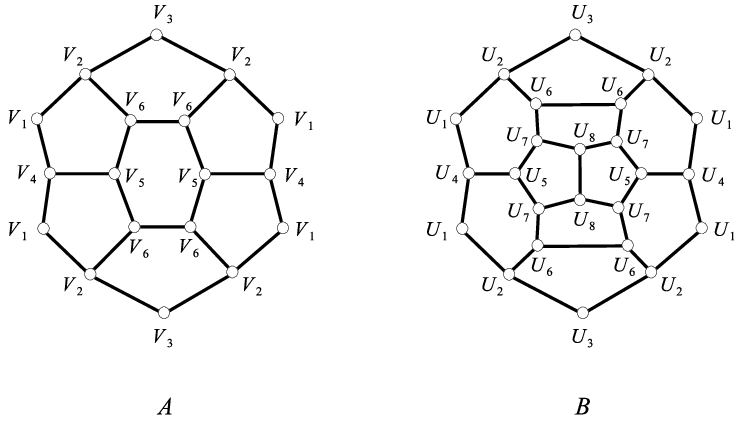}
\end{center}
\caption{\label{caps} Caps $A$ and $B$ for the fullerene $F_n$.}
\end{figure}

We will use Proposition \ref{propo3} to calculate the Graovac-Pisanski index of $F_n$. The orbits and corresponding representatives in the cap $A$ are denoted by $V_i$ and $v_i$, $i \in \lbrace 1,\ldots,6 \rbrace$. The orbits and corresponding representatives of the cap $B$ are denoted by $U_i$ and $u_i$, $i \in \lbrace 1,\ldots,8 \rbrace$.

First, we compute the distances of vertices $v_i$, $i \in \lbrace 1,\ldots,6 \rbrace$, in the corresponding orbits: $w_{V_1}(v_1)  =  2 + 4 + 6 = 12$,
$w_{V_2}(v_2)  =  2+ 4+ 5 = 11$,
$w_{V_3}(v_3)  =  6$,
$w_{V_4}(v_4)  =  5$,
$w_{V_5}(v_5)  =  3$,
$w_{V_6}(v_6)  =  1+2+3 = 6$.
The sum of all these distances is 43.

Moreover, we calculate the distances of vertices $u_i$, $i \in \lbrace 1,\ldots, 8 \rbrace$, in the corresponding orbits:
$w_{U_1}(u_1)  =  2 + 4 + 6 = 12$, 
$w_{U_2}(u_2)  =  2+ 4+ 6 = 12$,
$w_{U_3}(u_3)  =  6$,
$w_{U_4}(u_4)  =  6$,
$w_{U_5}(u_5)  =  4$,
$w_{U_6}(u_6)  =  1+4+5 = 10$,
$w_{U_7}(u_7)  =  2+2+3 = 7$,
$w_{U_8}(u_8)  =  1$.
The sum of all these distances is 58.
The orbits and the distances of the layers in the tubical part are analogous to the orbits $U_i$ and distances $w_{U_i}(u_i)$, $i \in \lbrace 1,2,3,4 \rbrace$.

Obviously, it holds $|V(F_n)| = 12(n+1) + 6 + 12 = 12n + 30$. Therefore, by Proposition \ref{propo3} we conclude
$$ {\gp}(F_n) = \frac{12n+30}{2}\left( 43 + 58 + 36(n-1) \right) = 216n^2 + 930n + 975.$$

We observe that the Graovac-Pisanski index of fullerene $F_n$ is an integer number for any $n\geq 1$. Also, the Graovac-Pisanski index of two other families of nanotubical fullerenes was computed in \cite{ashrafi_koo_diu1} and the result is again an integer number, what coincides with Proposition \ref{ful}.

\section{Graphs for which the Graovac-Pisanski index is not an integer}
\label{non_in}

By Theorem~{\ref{thm:bipartite}}, Corollary \ref{sodo}, and Proposition \ref{ful}, many chemical graphs
have integer Graovac-Pisanski index (for example the fullerene graphs, trees, hexagonal structures, etc.).
Of course, there are also connected graphs whose Graovac-Pisanski index is not
an integer.
The smallest such graph contains $5$ vertices. 

In Table \ref{tab:GP} we present the number of connected graphs on $n$ vertices, $n\in\{3,5,7,9\}$, whose
Graovac-Pisanski index is not an integer number. The results were obtained by using a computer programme.

\begin {table}[H]
\begin{center}
\begin{tabular}{|l|c|c|c|c|}
\hline
{Number of vertices} & 3 & 5 & 7 & 9 \\ 
\hline
{Number of connected graphs} & 2 & 21 & 853 & 261080 \\
\hline
{Number of graphs whose GP index is integer} & 2 & 14 & 516 & 197584 \\
\hline
{Number of graphs whose GP index is not integer} & 0 & 7 & 337 & 18496 \\
\hline
\end{tabular}
\caption{\label{tab:GP} 
Number of connected graphs on 3, 5, 7, and 9 vertices, divided in those with an integer and those with a non-integer   Graovac-Pisanski index }
\end{center}
\end {table}

\noindent
In Figure \ref{slika} we can see all the connected graphs on 5 vertices whose Graovac-Pisanski index is not an integer number.

\begin{figure}[h!] 
\begin{center}
\includegraphics[scale=0.8]{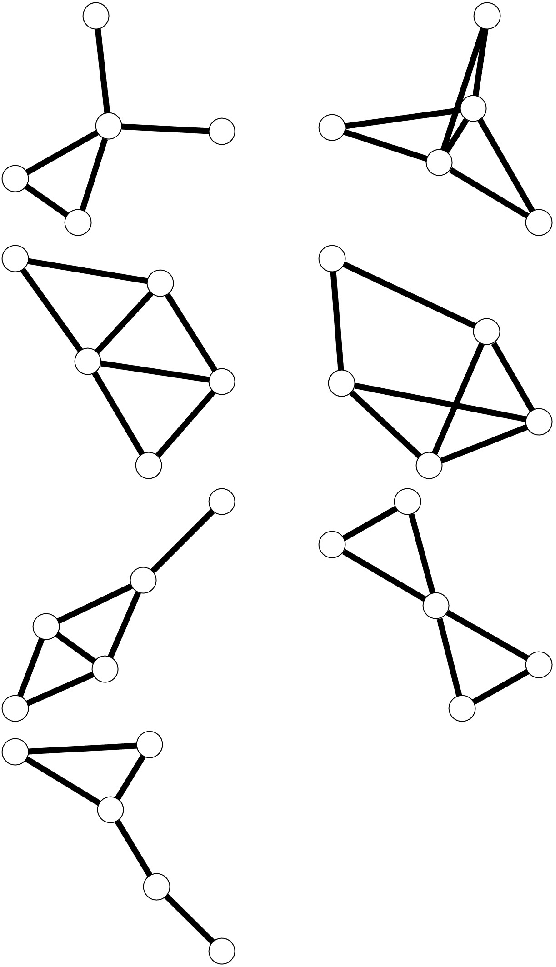}
\end{center}
\caption{\label{slika} All connected graphs on 5 vertices for which the Graovac-Pisanski index is not an integer.}
\end{figure}

The following theorem presents an infinite class of unicyclic graphs for which the Graovac-Pisanski
index is not an integer number.

\begin{theorem}
\label{thm:unicyclic}
Let $G$ be a graph consisting of a cycle of odd length $\ell$, where
$\ell\equiv 3\mbox{ or }5\pmod 8$, to which we attach a path of even length
$t$, where $t$ is a positive integer.
Then $G$ has $\ell+t$ vertices and ${\gp}(G)$ is not an integer number.
\end{theorem}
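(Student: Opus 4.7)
The plan is to apply formula (\ref{eq:GP3}) after identifying the automorphism group, the orbits, and the relevant distances. I would label the cycle vertices $c_0, c_1, \ldots, c_{\ell-1}$ in cyclic order so that $c_0$ is the attachment vertex, and denote by $p_1, \ldots, p_t$ the vertices of the pendant path (with $p_t$ the leaf). A degree argument, together with the fact that the pendant path is the unique shortest path from $c_0$ to the degree-$1$ vertex $p_t$, would show that every automorphism fixes $c_0$ and each $p_j$, so $\aut(G) = \{\Id, \sigma\}$, where $\sigma$ is the reflection $c_i \mapsto c_{\ell - i}$ of the cycle.

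The orbits of the natural action are then the $t+1$ singletons $\{c_0\}, \{p_1\}, \ldots, \{p_t\}$ and the pairs $V_i = \{c_i, c_{\ell-i}\}$ for $1 \leq i \leq (\ell-1)/2$. Singletons contribute $0$ to (\ref{eq:GP3}); since the pendant path provides no shortcut between cycle vertices, each pair contributes $\tfrac12 w_{V_i}(c_i) = \tfrac12 \min(2i, \ell - 2i)$. Formula (\ref{eq:GP3}) will therefore yield
$${\gp}(G) = (\ell + t)\cdot S, \qquad S = \sum_{i=1}^{(\ell-1)/2}\tfrac12\min(2i,\ell-2i).$$

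The heart of the argument will be a closed-form evaluation of $S$. Splitting the sum at $i = \lfloor \ell/4 \rfloor$ and using $1 + 3 + \cdots + (2m-1) = m^2$, I expect to obtain $S = k(2k+1)/2$ when $\ell = 4k+1$ and $S = (k+1)(2k+1)/2$ when $\ell = 4k+3$. The hypothesis $\ell \equiv 5 \pmod 8$ then forces $k$ to be odd in the first formula, and $\ell \equiv 3 \pmod 8$ forces $k$ to be even in the second; in either case the numerator is a product of two odd integers, so $S$ is a half-integer with odd numerator. Since $|V(G)| = \ell + t$ is odd (as $\ell$ is odd and $t$ is even), the product $(\ell + t)\cdot S$ has odd numerator over $2$, hence is not an integer.

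The main obstacle will be the two finite-sum computations that yield $S$ in closed form; everything else is routine. The residue classes $3$ and $5 \pmod 8$ are precisely those for which both numerator factors come out odd, confirming that the modular hypothesis on $\ell$ is tight.
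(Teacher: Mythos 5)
Your proposal is correct and follows essentially the same route as the paper: identify the orbit structure ($t+1$ singletons and $(\ell-1)/2$ pairs $\{c_i,c_{\ell-i}\}$), apply formula (\ref{eq:GP3}), and reduce to a parity check on an odd number times a half-integer. The only difference is cosmetic: the paper observes that the within-pair distances $\min(2i,\ell-2i)$ are exactly the numbers $1,2,\dots,\tfrac{\ell-1}{2}$, so the sum collapses at once to $\binom{(\ell+1)/2}{2}$ and the parity condition becomes ``$\tfrac{\ell^2-1}{8}$ is odd,'' whereas you evaluate the same sum by splitting at $\lfloor \ell/4\rfloor$ and checking the residues $3,5 \pmod 8$ case by case --- both computations agree.
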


\begin{proof}
In $G$ there are $\frac{\ell-1}{2}$ orbits with two vertices and $t+1$ orbits
with one vertex.
Let $z$ be the vertex of degree $3$ in $G$.
Then the orbits with two vertices contain pairs of vertices at the same
distance from $z$.
Hence, the distances between the pairs of vertices from the same orbit are
$1,2,\dots,\frac{\ell-1}{2}$ and
$$
{\gp}(G)=\big|V(G)\big|\cdot\frac 12\cdot\Big(1+2+\dots+\tfrac{\ell-1}2\Big)
=\big|V(G)\big|\cdot\frac 12\cdot\binom{(\ell+1)/2}2.
$$
It is easy to observe that the number $|V(G)|=\ell + t$ is odd and consequently,
the Graovac-Pisanski index is not an integer if and only if the number representing 
the binomial coefficient, $\frac 12\frac{\ell+1}2\frac{\ell-1}2$, is odd.
\qed
\end{proof}

\noindent
An example of a graph satisfying assumptions of Theorem \ref{thm:unicyclic} is presented as the last graph in Figure \ref{slika}.

Finally, we also remark that if $G$ is a dual of a fullerene, then ${\gp}(G)$ can be an integer number or not an integer number, what is apparent from the results in \cite{ashrafi_koo_diu1}.

\section*{Acknowledgments}

The author Matev\v z \v Crepnjak acknowledges the financial support from the Slovenian Research Agency, research core funding  No. P1-0403.

The author Martin Knor acknowledges partial support by Slovak research
grants VEGA 1/0142/17, VEGA 1/0238/19, APVV-15-0220 and APVV-17-0428
and Slovenian research agency ARRS, program no.\ P1-0383.

The authors Niko Tratnik and Petra \v Zigert Pleter\v sek acknowledge the financial support from the Slovenian Research Agency, research core funding No.\ P1-0297 and J1-9109.

\baselineskip=16pt

\end{document}